\font\smallit=cmti10
 \renewcommand\section{\@startsection {section}{1}{\z@}
	{-30pt \@plus -1ex \@minus -.2ex}
	{2.3ex \@plus.2ex}
	{\normalfont\normalsize\bfseries\boldmath}}
\renewcommand\subsection{\@startsection{subsection}{2}{\z@}
	{-3.25ex\@plus -1ex \@minus -.2ex}
	{1.5ex \@plus .2ex}
	{\normalfont\normalsize\bfseries\boldmath}}
\renewcommand{\@seccntformat}[1]{\csname the#1\endcsname. }
\newtheorem{theorem}{Theorem}
\newtheorem{lemma}{Lemma}
\newtheorem{proposition}{Proposition}
\newtheorem{corollary}{Corollary}
\theoremstyle{definition}
\newtheorem{definition}{Definition}
\newtheorem{remark}{Remark}
\newcommand\A{\mathbb{A}}
\newcommand\NN{\mathbb{N}}
\newcommand\QQ{\mathbb{Q}}
\newcommand\ZZ{\mathbb{Z}}
\newcommand\RR{\mathbb{R}}
\newcommand\DD{\mathcal{D}}
\newcommand\hh{\mathcal{H}}
\newcommand\SC{\mathcal{S}}
\newcommand\N{\mathop{\rm N}\nolimits}
\begin{document}

\begin{center}
  \uppercase{\bf \boldmath The Dedekind-Hasse's Criterion\\[2mm] in
    Quaternion Algebras} \vskip 20pt
  {\bf Adriana Cardoso}\\
  {\smallit Centro de Matem\'atica da Universidade do Porto \\
    4169-007
    Porto, Portugal}\\
  {\tt adrianacardos11@gmail.com}\\
  \vskip 10pt
  {\bf António Machiavelo}\\
  {\smallit Centro de Matem\'atica da Universidade do Porto \\
    4169-007
    Porto, Portugal}\\
  {\tt ajmachia@fc.up.pt}\\
\end{center}
\vskip 10pt

\centerline{Preprint date: \today}


\vskip 30pt 

\centerline{\bf Abstract}

\noindent We show that a criterion for an integral domain to be a
principal ideal domain (PID), due to Dedekind and Hasse, can also be
applied in quaternion orders, and that it can be used to build a
finite algorithm to determine if a given order is a principal left (or
right) ideal domain. Using this algorithm, we give an alternative
proof that the maximal orders of discriminant 7 and 13, which are
non-Euclidean, are PIDs.

We also provide a completely arithmetic proof of a result of Gordon
Pall that shows that, in an order that is a PID, an element of whose
norm is divisible by an integer $m$ always has a left and a right
divisor with norm $m$. This easily yields the existence and uniqueness
(up to associates) of factorizations of a quaternion modeled on a
factorization of its norm.

\vspace{2mm}
 
\noindent\small
\textbf{Keywords:} Quaternions, Hurwitz Integers, Quaternion Algebras,
Quaternion Orders, Non Commutative Principal Ideal Domains.
\normalsize

\pagestyle{headings}

\thispagestyle{empty}
\baselineskip=12.875pt
\vskip 30pt 

\section{Introduction}

Inside the rational quaternions, there is a maximal order, the ring of
Hurwitz integers, $\hh$, named after Adolf Hurwitz, who studied this
ring in great detail in his book \cite{Hurwitz}. In particular,
Hurwitz proved a factorization theorem for $\hh$ that states that any
primitive quaternion has a factorization into Hurwitzian primes
modeled on a rational factorization of its norm, unique up to unit-migration (for a modern proof, see \cite[Thm.~2,
p.~57]{Conway_Smith}).

Pall proved, in \cite[Thm.~2, p.~288]{Pall46}, using the theory of
quadratic forms, that, in certain orders of generalized quaternions,
given an element of that order and an integer that divides its norm,
then that element has a unique, up to associates, left divisor, as
well as a unique right divisor, whose norm is that integer. Here we
provide a completely arithmetic proof of this result for orders in
quaternion algebras that are principal ideal domains (PIDs). From
this, one can easily deduce the unique factorization modeled on a
factorization of the norm.

For a maximal order to be a PID it is sufficient and necessary for it
to have class number 1, i.e.~the number of isomorphism classes of
invertible (left or right) ideals is one, as all the ideals are
invertible. In 1980, in her book \cite{Vigneras}, Vignéras determined
the ten Eichler orders with class number 1, which in particular
include the maximal orders of discriminant equal to
$ 2, 3, 5, 7 \text{ or } 13$. She also remarked that the only norm
Euclidean orders were the ones with discriminant $2, 3 \text{ or }
5$. Later, in 1995, Brzezinski showed in \cite{Brzezinski} that there
were exactly 24 isomorphism classes of orders with class number 1 in
rational definite quaternion algebras, which included the ten Eichler
orders that Vignéras found. Here, we use the Dedekind-Hasse's
criterion to provide an alternative proof of the fact that the maximal
orders of discriminant $7$ and $13$ are PIDs.

In 2008, Deutsch, in \cite{Deutsch}, set to find a quaternion proof of
the universality of some quadratic forms of the form
$t^2 + a x^2 + b y^2 + c z^2$, by studying the associated quaternion
orders, the ones that are Euclidean for the norm. He succeeded in
studying the orders of discriminant $2$ and $3$, and was then followed
by Fitzgerald, in 2011, who provided a basis for the orders of
discriminant $5, 7$ and $13$, in \cite{Fitzgerald}. Together, these
two articles provide an arithmetic approach to the fact that the
orders of discriminant $2, 3$ and $5$ are Euclidean for the norm, and
thus PIDs.

We start, in section~\ref{fact}, by providing an arithmetic proof of
Pall's result mentioned above.  In section~\ref{criterion}, we show
that the Dedekind-Hasse's criterion (see \cite[Thm.~9.5,
p.~124]{Pollard_Diamond}) is also valid for quaternion orders.  In
section \ref{fin_alg}, we show how to use the Dedekind-Hasse's
criterion as the basis of a finite procedure to check if a quaternion
order is a PID, and in section~\ref{orders7and13} we apply this
algorithm to give a proof that the orders of discriminant $7$ and
$13$, although not norm Euclidean, are principal ideal
domains. Finally, in section~\ref{sec:PARI} we provide the PARI/GP
code that implements our algorithm.


\section{Factorization in a Principal Ideal Order}\label{fact}

In the following sections, let $H$ be an order in the rational
quaternion algebra $\A = \left ( \frac{-a,-b}{\QQ} \right)$, where
$a,b>0$ and such that $\A$ is a division algebra, with the $\QQ$-basis
$(1, i, j,k)$ such that $i^2=-a$ and $j^2=-b$.

\begin{definition}
	We say an element $\alpha \in H$ is \textit{primitive} if there is
	no integer $n \neq -1,1$ such that $n \mid \alpha$.
\end{definition}

Recall that a ring $R$ is said to be a \textit{left (right) principal
	ideal domain} (PID) if it is a (not necessarily commutative) domain
and every left (right) ideal is principal. The fact that we are
interest only on orders inside quaternion algebras, which are equipped
with a natural involution, implies that to every left property that
holds on the order there is an entirely analogous right property that
also holds.

\begin{theorem}\label{divisor}
	Let $H$ be a left (right) PID, $\theta \in H$ be a primitive
	quaternion, and $m \in \NN$ be such that $ m \mid \N(\theta)$. Then
	there is only one right (left) divisor of $\theta$ of norm $m$, up
	to left (right) associates.
\end{theorem}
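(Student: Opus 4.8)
The plan is to prove the statement for a left principal ideal domain $H$, producing the right divisor of norm $m$; the other case then follows verbatim after applying the conjugation anti-automorphism $\alpha\mapsto\bar\alpha$, which carries left ideals to right ideals, right divisors to left divisors, and preserves the norm. Two standing facts about the order $H$ will be used throughout. First, $H$ is closed under conjugation: for $\alpha\in H$ both $\alpha+\bar\alpha$ and $\alpha\bar\alpha=\N(\alpha)$ lie in $\ZZ$, so $\bar\alpha=(\alpha+\bar\alpha)-\alpha\in H$, and also $H\cap\QQ=\ZZ$. Second, since $a,b>0$ the norm is positive-definite, so an element of norm $1$ is a unit (its inverse is its conjugate, which lies in $H$), and $\N$ is multiplicative because $\A$ is a division algebra.

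First I would form the left ideal $I=H\theta+Hm$ and, using that $H$ is a left PID, write $I=H\delta$. Since $\theta\in I$ we get $\theta=\gamma\delta$ for some $\gamma\in H$, so $\delta$ is a right divisor of $\theta$, and it only remains to compute its norm. Writing $\delta=\lambda\theta+\mu m$ with $\lambda,\mu\in H$ and using $\theta\bar\theta=\N(\theta)$ together with $m\mid\N(\theta)$, a short computation gives $\delta\bar\theta=m\xi$ for some $\xi\in H$; expanding $\bar\delta=\bar\theta\bar\lambda+m\bar\mu$ then yields $\N(\delta)=\delta\bar\delta=m\xi\bar\lambda+m\delta\bar\mu\in mH$. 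As $\N(\delta)\in\ZZ$ and $H\cap\QQ=\ZZ$, this forces $m\mid\N(\delta)$, say $\N(\delta)=mt$ with $t\in\NN$.

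The crux is to show $t=1$, and this is exactly where primitivity of $\theta$ enters. Right-multiplying the ideal identity $H\delta=H\theta+Hm$ by $\bar\delta$, and using $\theta\bar\delta=\gamma\,\N(\delta)=mt\gamma$ together with $\delta\bar\delta=mt$, I obtain $H(mt)=H(mt\gamma)+H(m\bar\delta)$, whence $m\bar\delta\in H(mt)$ and therefore $t\mid\bar\delta$ in $H$. Conjugating gives $t\mid\delta$, so $\theta=\gamma\delta$ is divisible by the integer $t$; primitivity of $\theta$ forces $t=1$, i.e.\ $\N(\delta)=m$, completing the existence part. I expect this extraction of an integer divisor of $\theta$ from the ideal relation to be the main obstacle, since it is the one step that genuinely requires manipulating the principal generator rather than just its norm.

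For uniqueness up to left associates, let $\delta'$ be any right divisor of $\theta$ with $\N(\delta')=m$, say $\theta=\gamma'\delta'$. Then $\theta\in H\delta'$, and since $m=\N(\delta')=\bar{\delta'}\delta'$ with $\bar{\delta'}\in H$, also $m\in H\delta'$; hence $H\delta=H\theta+Hm\subseteq H\delta'$. Writing $\delta=\sigma\delta'$ and taking norms gives $\N(\sigma)=1$, so $\sigma$ is a unit and $\delta'=\sigma^{-1}\delta$ is a left associate of $\delta$. Thus every right divisor of $\theta$ of norm $m$ is a left associate of $\delta$, which is the asserted uniqueness.
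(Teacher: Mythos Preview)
Your proof is correct and follows essentially the same strategy as the paper: take a generator $\delta$ of the left ideal $H\theta+Hm$, show $m\mid\N(\delta)$, use primitivity of $\theta$ to force $\N(\delta)=m$, and deduce uniqueness from a norm comparison. The only noteworthy difference is in the step $t=1$: the paper writes $m=g\delta$, takes norms to get $m=\N(g)t$, and then cancels $g$ in $g\delta=m=g\bar g\,t$ to obtain $\delta=\bar g\,t$; you instead right-multiply the ideal identity by $\bar\delta$ to get $m\bar\delta\in H(mt)$ and hence $t\mid\delta$. Both routes are short and yield the same integer divisor of $\theta$, so this is a cosmetic variation rather than a genuinely different argument.
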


\begin{proof}
	Let $H$ be a left PID.  Let $\gamma \in H$ be such that
	$H \theta + H m = H \gamma$, and let $\alpha, \beta \in H$ be such
	that $\gamma = \alpha \theta + \beta m$. Then
	\[\N(\gamma) = (\alpha \theta + \beta m ) (\overline{\theta}
	\overline{\alpha} + \overline{\beta}m) = \N(\alpha) \N(\theta) +
	m^2 \N(\beta) + m (\alpha \theta \overline{\beta} + \beta
	\overline{\theta} \overline{\alpha}),\] which is divisible by $m$,
	by hypothesis. Let $t \in \NN$ be such that $\N(\gamma)=mt$. Since
	$\theta, m \in H\gamma$, there are
	$f,g \in H \colon \; \theta = f \gamma$ and $m = g \gamma$. Then,
	taking the norm, we have $m^2 = \N(g) \N(\gamma) = \N(g) mt$, and
	thus $m = \N(g)t$. From this it follows that
	\[g \gamma = m = \N(g) t = g\overline{g}t,\] and so
	$\gamma = \overline{g}t$. But then
	$\theta = f \gamma = f \overline{g}t$, which implies that
	$t \mid \theta$, and thus we must have $t = 1$, since $\theta$ is
	primitive. This shows that $\N(\gamma) = m$, and since
	$\theta = f \gamma$, we have shown that $\theta$ does indeed have a
	right divisor norm $m$.
	
	Now, if $\delta$ is another right divisor of $\theta$ of norm $m$,
	then $\theta = \epsilon \delta$ for some $\epsilon \in H$. But then
	\[\gamma = \alpha \theta + \beta m = \alpha \epsilon \delta + \beta
	\overline{\delta} \delta = (\alpha \epsilon + \beta
	\overline{\delta}) \delta.\] The fact that
	$\N(\gamma) = \N(\delta)$ then entails that $\delta$ and $\gamma$
	are left associates.
\end{proof}

Recall that, we say an element $\pi \in H$ is a \textit{prime
	quaternion} (in $H$) if $\pi = \alpha \beta $ implies that either
$\alpha$ or $\beta$ is a unit, that is, if and only if the norm of
$\pi$ is a rational prime.

Using Theorem~\ref{divisor}, one can easily adapt the proof of the the
unique factorization theorem for the Hurwitz integers given in
\cite{Conway_Smith} to any order which is a PID, yielding the
following result.

\begin{theorem}[Factorization Theorem] \label{Fact_thm} If an order
	$H$ is a (left or right) PID, then to each factorization of the norm
	$q$ of a primitive quaternion $\gamma \in H$ into a product
	$p_0p_1 \dots p_{k-1}p_k$ of rational primes, there is a
	factorization
	\[ \gamma = \pi_0 \pi_1 \dots \pi_{k-1} \pi_k \] of $\gamma$ into a
	product of prime quaternion modeled on that factorization of $q$,
	that is, with $\N(\pi_i) = p_i$.  Moreover, if
	$\gamma = \pi_0 \pi_1 \dots \pi_{k-1}\pi_k$ is any one factorization
	modeled on $p_0p_1 \dots p_{k-1}p_k$, then all the others have the
	form
	\[\gamma = \pi_0 u_1 \cdot u^{-1}_1 \pi_1 u_2 \cdot u^{-1}_2 \pi_2 u_3 \dots
	u^{-1}_{k-1} \pi_{k-1} u_k \cdot u^{-1}_k \pi_k,\] where, $u_i$ is
	a unit for all $i = 1, \dots, k$, which means that the factorization
	on a given model is unique up to unit-migration.
\end{theorem}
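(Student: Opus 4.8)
The plan is to establish existence and uniqueness separately, in both cases by peeling prime factors off the right end of $\gamma$ and invoking Theorem~\ref{divisor}. Throughout I assume $H$ is a left PID; the right case is symmetric, and by the equivalence established in Section~\ref{criterion} the two notions coincide anyway. The one auxiliary fact I would isolate first, and use repeatedly, is that a left factor of a primitive quaternion is again primitive: if $\gamma = \delta\eta$ and an integer $n$ divides $\delta$, then $n$ divides $\gamma$, so primitivity of $\gamma$ forces $n = \pm 1$.

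For existence I would induct on the number $k+1$ of prime factors. Since $p_k \mid \N(\gamma)$ and $\gamma$ is primitive, Theorem~\ref{divisor} supplies a right divisor $\pi_k$ of $\gamma$ with $\N(\pi_k) = p_k$; writing $\gamma = \gamma'\pi_k$ and taking norms gives $\N(\gamma') = p_0\cdots p_{k-1}$. As $\gamma'$ is a left factor of $\gamma$ it is primitive, so the induction hypothesis factors $\gamma' = \pi_0\cdots\pi_{k-1}$ modeled on $p_0\cdots p_{k-1}$. Because each $\N(\pi_i) = p_i$ is a rational prime, each $\pi_i$ is a prime quaternion, which closes the existence claim.

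For uniqueness, suppose $\gamma = \pi_0\cdots\pi_k = \pi_0'\cdots\pi_k'$ are two factorizations modeled on the same $p_0\cdots p_k$. Both $\pi_k$ and $\pi_k'$ are right divisors of $\gamma$ of norm $p_k$, so the uniqueness clause of Theorem~\ref{divisor} makes them left associates, say $\pi_k' = u_k^{-1}\pi_k$ for a unit $u_k$. Since $H$ is an order in the division algebra $\A$ it has no zero divisors, so I may cancel the nonzero factor $\pi_k$ on the right of $\pi_0\cdots\pi_{k-1}\pi_k = (\pi_0'\cdots\pi_{k-1}'u_k^{-1})\pi_k$ to get $\pi_0\cdots\pi_{k-1} = \pi_0'\cdots\pi_{k-2}'(\pi_{k-1}'u_k^{-1})$. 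Noting that $\N(\pi_{k-1}'u_k^{-1}) = p_{k-1}$, this exhibits two factorizations of the primitive quaternion $\pi_0\cdots\pi_{k-1}$ modeled on $p_0\cdots p_{k-1}$; by induction they differ by unit migration, and unwinding $\pi_{k-1}'u_k^{-1} = u_{k-1}^{-1}\pi_{k-1}$ recovers precisely $\pi_{k-1}' = u_{k-1}^{-1}\pi_{k-1}u_k$, while the base case $k=0$ is the trivial identity $\gamma = \pi_0 = \pi_0'$.

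I expect the main obstacle to be the bookkeeping in the uniqueness induction: one must absorb the unit $u_k$ produced at the top level into the $(k-1)$-st factor \emph{before} applying the induction hypothesis, and then verify that the telescoping pattern of the $u_i^{-1}\,\cdot\,u_i$ pairs reproduces the stated unit-migration form without an off-by-one slip in the indices. The two structural facts that make each step legitimate — right-cancellation (the absence of zero divisors) and the preservation of primitivity under taking left factors — are straightforward to justify and I would state them explicitly up front so the induction reads cleanly.
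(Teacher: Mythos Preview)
Your proposal is correct and is precisely the argument the paper has in mind: the paper does not spell out a proof but simply says ``Using Theorem~\ref{divisor}, one can easily adapt the original proof by Conway and Smith for this case,'' and your induction peeling off the rightmost prime factor via Theorem~\ref{divisor} is exactly that adaptation. Your explicit handling of the auxiliary facts (primitivity of left factors, right-cancellation from the division-algebra structure) and the unit-migration bookkeeping fills in details the paper leaves to the reader.
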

\section{The Dedekind-Hasse's Criterion}\label{criterion}

Let $H$ be an order in the rational quaternion algebra $\A$, as in the
previous section. The usual \textit{norm} map $\N:\A\to \QQ_0^+$,
which sends $\alpha = x_0 + x_1 i + x_2 j + x_3 k$ to
$\alpha \overline{\alpha} = x_0^2 + a x_1^2 + b x_2^2 + ab x_3^2$,
plays a central role in studying the arithmetic of $H$.  The following
criterion for an order to be a principal ideal domain is based on the
criterion attributed to Dedekind and Hasse in \cite[Thm.~9.5,
p.~124]{Pollard_Diamond}.

\begin{theorem}\label{PID_criterion}
	The order $H$ is a left PID \textbf{if and only
		if}
	\begin{equation}\label{eq:criterion}
		\forall \rho \in \A \setminus H \; \exists \, \alpha, \beta \in H
		\colon 0<\N(\alpha \rho - \beta ) <1.
	\end{equation}
	
\end{theorem}

\begin{proof}
	Suppose $H$ is a left PID, and let $\rho \in \A \setminus H$ be
	given. As $H$ is an order, we have $H \otimes_\ZZ \QQ = \A$, thus
	there exists $n \in \NN$ such that $n \rho \in H$, with $n>1$. Let
	$\gamma$ be a generator of the ideal $Hn + H(n \rho)$. Then
	$\gamma = \alpha (n \rho) - \beta n$ for some $\alpha, \beta \in
	H$. From the fact that $ n \in H\gamma $, we have
	$n^2 = \N (n) = q \N(\gamma)$ for some $q \in \NN$. Now if
	$\N(\gamma)=n^2$, we would have that $n$ and $\gamma$ are
	associates, and as $n\rho \in H\gamma= H n$, we would get
	$\rho \in H$, which is absurd. So $0 < \N(\gamma) < \N(n)$, which
	upon dividing by $\N(n)$ yields $ 0 < \N(\alpha \rho - \beta) < 1$.
	
	Conversely, assume the right hand condition holds, and let $I$ be a
	non-zero left ideal of $H$. Let $ \gamma$ be an element of the ideal
	$I$ of minimal norm, that is, be such that
	$\N(\gamma) = \min\{\N(x): x \in I \setminus \{0\}\}$. Suppose that
	$I \not=H\gamma$. Then there exists an element
	$\delta \in I \setminus H \gamma $, so
	$\delta \gamma^{-1} \in \A \setminus H$, thus, by the hypothesis
	made, there are two elements $\alpha, \beta \in H $ such that
	$0<\N( \alpha (\delta \gamma^{-1}) - \beta ) <1$. This means that
	the element
	$\eta :=\alpha \delta - \beta \gamma \in I \setminus \{0\}$
	satisfies $\N(\eta)<\N(\gamma)$, which yields the desired
	contradiction.
\end{proof} 

\noindent\textit{Remark\/}: Analogously, one shows that $H$ is a right
PID \textbf{if and only if}
$ \forall \rho \in \A \setminus H \; \exists \, \alpha, \beta \in H
\colon 0<\N( \rho \alpha - \beta ) <1$. As mentioned above, the fact
that $H$ has an involution immediately implies that it is a left PID
if and only if it is a right PID. This fact allow us to concentrate
our attention only on left PIDs.

\vspace{3mm}

\noindent\textit{Remark\/}: Clearly, $H$ is
norm Euclidean if and only if for all $\rho \in \A$ there exists a
$\beta \in H$ such that $\N (\rho - \beta) < 1$, which is just a
particular case of condition \eqref{eq:criterion}, and we get at once
the well-known fact that a norm Euclidean order is necessarily a PID.

\vspace{3mm}

The following known result also easily follows from the
Dedekind-Hasse's criterion.

\begin{proposition}
	If an order $H$ in $\A$ is a PID, then $H$ is
	maximal.
\end{proposition}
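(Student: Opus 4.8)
The plan is to prove the contrapositive: assume $H$ is a PID (hence, by the Corollary, both a left and right PID) but is contained in a strictly larger order $H'$, and derive a contradiction using the Dedekind--Hasse criterion. Since $H \subsetneq H'$ and both are orders in $\A$, I may pick some $\rho \in H' \setminus H$. The key observation is that $\rho$ lives in another order, and so it is in some sense ``integral'': concretely, $\rho$ satisfies $\rho + \overline{\rho} \in \ZZ$ and $\N(\rho) = \rho\overline{\rho} \in \ZZ$, because the reduced trace and reduced norm of an element of any order take integer values. This integrality of $\N(\rho)$ is the crucial fact I would exploit.

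Now I apply the criterion. Because $H$ is a left PID and $\rho \in \A \setminus H$, Theorem~\ref{PID_criterion} furnishes $\alpha, \beta \in H$ with $0 < \N(\alpha\rho - \beta) < 1$. The idea is to show that $\alpha\rho - \beta$ lies in the larger order $H'$, and then to contradict the bound $\N(\alpha\rho - \beta) < 1$. First I would argue that $\alpha\rho - \beta \in H'$: indeed $\beta \in H \subseteq H'$, and $\alpha\rho \in H'$ since $\alpha \in H \subseteq H'$, $\rho \in H'$, and $H'$ is closed under multiplication. Hence $\eta := \alpha\rho - \beta$ is a nonzero element of the order $H'$, so its norm $\N(\eta)$ is a nonnegative integer. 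But $0 < \N(\eta) < 1$ forces $\N(\eta)$ to be a positive integer strictly less than $1$, which is impossible. This contradiction shows no element of $\A \setminus H$ can satisfy the criterion's conclusion while simultaneously lying in a larger order, and therefore $H$ admits no proper overorder, i.e.\ $H$ is maximal.

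The main obstacle, and the point that deserves the most care, is justifying that the reduced norm is integer-valued on \emph{every} order of $\A$, and in particular on $H'$. This is a standard fact: an element of an order is a root of its characteristic (reduced) polynomial $X^2 - \operatorname{tr}(\rho)X + \N(\rho)$, whose coefficients are the reduced trace and reduced norm; integrality of $\rho$ over $\ZZ$ forces these coefficients into $\ZZ$. I would state this cleanly, perhaps citing the general theory of orders in quaternion algebras, so that the step ``$\eta \in H' \Rightarrow \N(\eta) \in \ZZ_{\geq 0}$'' is unambiguous. Once that is in place, the rest of the argument is the short, clean norm-bound contradiction sketched above, and no delicate computation with the basis $(1,i,j,k)$ or the structure constants $a,b$ is required.
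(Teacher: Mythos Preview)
Your proposal is correct and follows essentially the same argument as the paper: pick an element $\rho$ in a strictly larger order $H'\supsetneq H$, apply the Dedekind--Hasse criterion to obtain $\alpha,\beta\in H$ with $0<\N(\alpha\rho-\beta)<1$, and then observe that $\alpha\rho-\beta\in H'$ has integral norm, a contradiction. The paper's proof is identical in structure (with $L$ in place of $H'$), citing \cite[Thm.~10.1]{Reiner} for the integrality of the norm on an order, which is exactly the point you flagged as needing care.
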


\begin{proof}
	Suppose that $H$ is not a maximal order, that is, there exists an
	order $L$ such that $H \subsetneq L \subset \A$. As $L$ is an order,
	by \cite[Thm.~10.1, p.~125]{Reiner}, one has that
	$\N(L) : = \{ \N(\alpha) \colon \alpha \in L\} \subseteq \ZZ$.
	
	Pick $\ell \in L \setminus H$. Then, as $\ell \in \A \setminus H $,
	there exist elements $\alpha, \beta \in H$ such that
	$0 < \N(\alpha \ell - \beta) <1$. As
	$\alpha, \beta \in H \subsetneq L$, $\alpha \ell - \beta \in L$, and so
	$\N(\alpha \ell - \beta) \in \ZZ$, which is a contradiction.
\end{proof}
\section{Reduction to a Finite Algorithm}\label{fin_alg}

Given any rational quaternion algebra $\A$, and an order
$H\subset\A$, we define
\[\DD_\A:=\{\rho \in \A\>|\> 0<\N(\rho)<1\}\]
and
\[\SC_H:=\{\rho \in \A \setminus H \>|\> \exists \alpha,
\beta \in H:\; \alpha \rho - \beta \in \DD_\A\}.\]
It follows from Theorem~\ref{PID_criterion} that $H$ is a PID if and
only if $\SC_H = \A \setminus H$.

If we now wish to apply the Dedekind-Hasse's criterion to the order
$H$, we need to verify if every element of $\A \setminus H$ is in
$\SC_H$. The aim of this section is to provide a finite algorithm for
doing it, by showing that we only need to verify that for finitely
many elements, and for each of these elements there is only a finite
procedure for checking whether or not it is in $\SC_H$.

\begin{proposition}\label{sum_prod}
	Let $\rho \in \A \setminus H$ and $\delta \in H$. Then
	\begin{enumerate}
		\item $\rho \in \SC_H$ if and only if $\rho + \delta \in \SC_H$,
		\item If $\delta \rho \in \SC_H$, then $ \rho \in \SC_H$.
	\end{enumerate}
\end{proposition}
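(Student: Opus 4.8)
The plan is to exploit the algebraic structure of the witnesses defining $\SC_\A$: membership of $\rho$ in $\SC_\A$ is certified by a pair $(\alpha,\beta)\in H\times H$ with $\alpha\rho-\beta\in\DD_\A$, and each assertion reduces to transporting such a witness from one quaternion to another by a purely formal substitution. The only point requiring care is the side condition that the element in question actually lies in $\A\setminus H$, so I would dispose of that first in each case.

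For part (1), I would begin by noting that $\A\setminus H$ is invariant under translation by $\delta\in H$: since $H$ is a ring containing $\delta$, one has $\rho\in H\iff\rho+\delta\in H$, whence $\rho\in\A\setminus H$ iff $\rho+\delta\in\A\setminus H$. Granting this, suppose $\rho\in\SC_\A$ with witness $(\alpha,\beta)$, so $0<\N(\alpha\rho-\beta)<1$. I would then set $\beta':=\beta+\alpha\delta$, which lies in $H$ because $\alpha,\delta\in H$, and observe that
\[
\alpha(\rho+\delta)-\beta'=\alpha\rho+\alpha\delta-\beta-\alpha\delta=\alpha\rho-\beta,
\]
so that $(\alpha,\beta')$ witnesses $\rho+\delta\in\SC_\A$. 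The converse direction is the same argument applied to $\rho+\delta$ together with the element $-\delta\in H$, recovering $\rho$.

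For part (2), the hypothesis $\rho\in\A\setminus H$ is already supplied by the statement, so no membership check is needed for $\rho$. Assuming $\delta\rho\in\SC_\A$ with witness $(\alpha,\beta)$, that is $0<\N\bigl(\alpha(\delta\rho)-\beta\bigr)<1$, I would use associativity to rewrite $\alpha(\delta\rho)=(\alpha\delta)\rho$ and take the new witness $(\alpha\delta,\beta)$; since $\alpha\delta\in H$, this shows $\rho\in\SC_\A$.

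I do not expect a genuine obstacle here, as the content is entirely formal manipulation of the defining quantifiers; the essential inputs are merely that $H$ is closed under its ring operations and that norms are unchanged by the chosen substitutions. The one subtlety worth flagging is that part (2) is deliberately one-directional: left-multiplying a witness by $\delta$ cannot in general be undone, and indeed $\delta\rho$ may land inside $H$ even when $\rho\notin H$ (trivially so for $\delta=0$), so the converse implication should not be asserted.
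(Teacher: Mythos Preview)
Your argument is correct and essentially identical to the paper's: both transport the witness $(\alpha,\beta)$ by replacing $\beta$ with $\beta+\alpha\delta$ for part~(1), and by replacing $\alpha$ with $\alpha\delta$ for part~(2), invoking the converse of~(1) via translation by $-\delta$. If anything you are slightly more careful, since you explicitly verify that $\rho+\delta\in\A\setminus H$, a point the paper leaves implicit.
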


\begin{proof}
	Suppose that $\rho \in \SC_H$. Then there are $\alpha, \beta \in H$
	such that $ 0 < \N(\alpha \rho - \beta) < 1$. As
	$\alpha\delta + \beta \in H$, and
	\[ \N \big (\alpha (\rho + \delta) - (\alpha\delta + \beta) \big ) =
	\N(\alpha \rho - \beta), \] we get that $\rho + \delta \in
	\SC_H$. The converse is now immediate.
	
	For the second property, suppose that $\delta \rho \in \SC_H$. Then,
	we have $\alpha, \beta \in H$ such that
	\[ 0 < \N(\alpha \delta \rho - \beta) < 1,\] but as
	$\alpha\delta \in H$, it immediately follows that
	$\rho \in \SC_H$.
\end{proof}

\begin{corollary}
	For $\rho_1, \rho_2 \in \A \setminus H$ one has
	\[ \rho_1 \equiv \rho_2 \pmod{H} \Longrightarrow (\rho_1 \in \SC_H
	\iff \rho_2 \in \SC_H).\]
\end{corollary}

Thus, in order to check if a quaternion order is a PID, it is enough
to select one element from each non-zero class in $\A/H$, and then
verify if all such elements belong to $\SC_H$. Fixing a $\ZZ$-basis
for $H$, which is then a $\QQ$-basis for $\A$, it follows that we just
need to check if all non-zero elements of $\A$ whose coefficients, in
that basis, have absolute value less or equal to $\frac{1}{2}$ are in
$\SC_H$.

Moreover, we can further reduce the number of elements of
$\A \setminus H$ we need to check as follows. As $H$ is an order, any
element of $\A$ is of the form $\frac{1}{n} \delta$, for some
$n \in \NN$ and $\delta \in H$. Thus, by Proposition~\ref{sum_prod},
if $\frac{1}{p} \delta \in \SC_H$, for a rational prime $p$, then
$\frac{1}{pq} \delta \in \SC_H$, for any $q \in \ZZ$. Thus, we need
only to check if $\frac{1}{p} \delta \in \SC_H$ for all primes $p$ and
$\delta \in H$. Now, if $\{1,v_1,v_2,v_3\}$ is a $\ZZ$-basis for $H$,
it follows that one only needs to verify if
\[\frac{d_0}{p} + \frac{d_1}{p} v_1 + \frac{d_2}{p} v_2 +
\frac{d_3}{p} v_3\in\SC_H\] for all positive rational primes $p$,
and for $d_i$ with $d_0\geq 0$ and $\abs{d_i} \leq \frac{p}{2}$, a finite
set with no more than $\frac12 p^4$ elements.

Furthermore, since $\frac{\N(\delta)}\delta = \overline{ \delta}\in H$
for any $\delta\in H$, we have that
\[ p \nmid \N(\delta)\implies \exists_{r\in\ZZ, 0<\abs{r}< p}:
\overline{\delta} \cdot \frac{\delta}{p} = \frac{\N(\delta)}{p}
\equiv \frac{r} p \pmod{ H},\] and thus, for any $\delta \in H$,
$p \nmid\N(\delta) \implies \frac{\delta}{p} \in\SC_H$
{\footnotesize(since $\N(\frac{r}{p})<1$)}.

All of the above proves the following:

\begin{theorem}\label{Reduction_deltas}
	With the above notations, $H$ is a PID \; {\bf iff} \;
	$\frac{\delta}{p} \in \SC_H$, for all positive rational primes $p$,
	and for all $\delta = d_0 + d_1 v_1 + d_2 v_2 + d_3 v_3$ such that
	$\abs{d_i} \leq \frac{p}{2}$ and $ p\mid \N(\delta)$.
\end{theorem}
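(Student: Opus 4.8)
The plan is to read the theorem as the single equivalence $\SC_\A = \A \setminus H$, which by the discussion recorded after Theorem~\ref{PID_criterion} is exactly the statement that $H$ is a left PID. The forward implication is immediate: if $H$ is a left PID, equivalently $\SC_\A = \A \setminus H$, then any $\frac{\delta}{p}$ with $\delta \neq 0$, $\abs{d_i} < \frac p2$ and $p \mid \N(\delta)$ lies in $\SC_\A$, once one checks that such a $\frac{\delta}{p}$ really belongs to $\A \setminus H$; but $\abs{d_i} < \frac p2 < p$ forces $\delta \notin pH$ unless $\delta = 0$, so this holds. The work is therefore in the converse.

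Assuming the displayed finite condition, I would show every $\rho \in \A \setminus H$ lies in $\SC_\A$ by a chain of three reductions, each justified by a result already in the excerpt. \emph{First, reduce to a prime denominator.} Write $\rho = \frac1n\delta$ with $n \in \NN$, $\delta \in H$, and induct on the number of prime factors of $n$. Choosing $p \mid n$ and writing $n = pm$, either $\frac{\delta}{p} \in H$, so that $\rho = \frac{1}{m}\delta'$ for some $\delta' \in H$ with strictly fewer prime factors in the denominator and the induction hypothesis applies (the hypothesis $\rho \notin H$ is preserved), or $\frac{\delta}{p} \in \A \setminus H$, in which case $m\rho = \frac{\delta}{p}$ and the second part of Proposition~\ref{sum_prod} reduces the claim for $\rho$ to the claim for $\frac{\delta}{p}$. \emph{Second, reduce the coefficients.} Since membership in $\SC_\A$ depends only on the class modulo $H$ (the corollary to Proposition~\ref{sum_prod}), I replace each $d_i$ by its representative of least absolute value modulo $p$, after which I may assume $\abs{d_i} < \frac p2$; one checks $\frac{\delta}{p} \notin H$ survives this step, for otherwise all reduced $d_i$ would vanish.

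\emph{Third, split on the norm.} For $\frac{\delta}{p}$ with $\abs{d_i} < \frac p2$, if $p \mid \N(\delta)$ this is precisely the hypothesis, while if $p \nmid \N(\delta)$ the computation preceding the theorem applies: $\overline{\delta} \in H$ and $\overline{\delta}\cdot\frac{\delta}{p} = \frac{\N(\delta)}{p} \equiv \frac{r}{p} \pmod H$ for some integer $r$ with $0 < \abs{r} < p$, and since $\frac rp \in \A\setminus H$ satisfies $0 < \N(\frac rp) < 1$ it lies in $\SC_\A$; the second part of Proposition~\ref{sum_prod} then passes this down to $\frac{\delta}{p}$. Combining the three reductions gives $\rho \in \SC_\A$, as wanted.

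The conceptual content is light, since the theorem merely repackages the reductions already developed; the real task is organizational, namely applying the reductions in the right order and verifying that $\rho \notin H$ is maintained throughout the induction. The one genuine subtlety I would watch is the coefficient bound at $p = 2$, where a least--absolute--value residue modulo $2$ cannot satisfy the strict inequality $\abs{d_i} < \frac p2$ unless it is $0$; I would resolve this by permitting the boundary value $\abs{d_i} \le \frac p2$ in the reduction step, or by confirming directly that the relevant $p = 2$ representatives are already accounted for, so that no case slips through.
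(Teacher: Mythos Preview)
Your argument is essentially the paper's own: the proof consists precisely of the chain of reductions (invariance modulo $H$, passage to a prime denominator via Proposition~\ref{sum_prod}(2), and the norm trick $\overline{\delta}\cdot\tfrac{\delta}{p}\equiv\tfrac{r}{p}$ for $p\nmid\N(\delta)$) assembled in the paragraphs immediately preceding the statement, and you have reproduced them with slightly more explicit bookkeeping on the induction and on the hypothesis $\rho\notin H$. Your caveat about $p=2$ is well placed---the strict bound $\abs{d_i}<\tfrac p2$ is indeed vacuous there, and the paper's accompanying code in Section~\ref{sec:PARI} actually tests $d_i\in\{0,1\}$ at $p=2$, so the non-strict reading you suggest is the intended one.
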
    

Next, we wish to bound the number of primes one needs to check. Let,
as before, $H=\ZZ[1,v_1,v_2,v_3]$ be an order in the algebra
$\A = \left ( \frac{-a,-b}{\QQ} \right)$ where $a,b >0$.  Define
$d=(1+a)(1+b)$ and
\[M= \max_{\substack{1 \leq r \leq 3 \\ 0 \leq s \leq 3}}
\abs{a_{r,s}}, \text{ where } v_r= a_{r,0} + a_{r,1} i + a_{r,2} j +
a_{r,3} k.\]

Set $\delta = d_0 + d_1 v_1 + d_2 v_2 + d_3 v_3$ with
$\abs{d_i} \leq \frac{p}{2}$. We are now going to show that, for $p$
sufficiently large, $\frac{\delta}{p} \in \SC_H$ trivially. More
precisely, we are going to show that there is some $\alpha \in H$ such
that $\N \left(\alpha\, \frac{\delta}{p} \right)<1$.

First, we show that if we had $\abs{d_i} < \frac{p}{Q}$ for $i=1,2,3$,
and for some appropriately chosen $Q\in\NN$, then
\small
\begin{alignat*}{3}
	\N \left(\frac{\delta}{p} \right) = & \left(\frac{d_0}{p} +
	\frac{d_1}{p} a_{1,0} + \frac{d_2}{p} a_{2,0} + \frac{d_3}{p}
	a_{3,0} \right )^2 + a \left(\frac{d_1}{p} a_{1,1} +
	\frac{d_2}{p} a_{2,1} + \frac{d_3}{p} a_{3,1} \right)^2 + \\
	& + b \left( \frac{d_1}{p} a_{1,2} + \frac{d_2}{p} a_{2,2} +
	\frac{d_3}{p} a_{3,2} \right)^2 + ab \left( \frac{d_1}{p} a_{1,3}
	+ \frac{d_2}{p} a_{2,3}+ \frac{d_3}{p} a_{3,3} \right)^2  \\
	< & \left( \frac{1}{2} + \frac{1}{Q} |a_{1,0}| + \frac{1}{Q}
	|a_{2,0}| + \frac{1}{Q} |a_{3,0}|\right )^2 + a \left( \frac{1}{Q}
	|a_{1,1}| + \frac{1}{Q} |a_{2,1}| + \frac{1}{Q}
	|a_{3,1}|\right )^2 + \\
	& + b \left( \frac{1}{Q} |a_{1,2}| + \frac{1}{Q} |a_{2,2}| +
	\frac{1}{Q} |a_{3,2}|\right )^2 + ab \left(\frac{1}{Q} |a_{1,3}| +
	\frac{1}{Q} |a_{2,3}| + \frac{1}{Q} |a_{3,3}|\right )^2 \\
	< & \left( \frac{1}{2} + \frac{3M}{Q}\right )^2 + a \left
	(\frac{3M}{Q}\right )^2 + b \left( \frac{3M}{Q}\right )^2 +
	ab\left( \frac{3M}{Q}\right )^2 \\
	& = \frac{1}{4} + 3\, \frac{M}{Q} + 9d \left(\frac{M}{Q}\right)^2
\end{alignat*}
\normalsize Since
\[\frac{1}{4} + 3\, \frac{M}{Q} + 9d
\left(\frac{M}{Q}\right)^2 \leq 1 \iff Q \geq 2M (1+\sqrt{1+3d}),
\]
one gets the following result:
\begin{lemma}\label{estimateN}
	With the above notations, setting
	$Q = \left \lceil 2 M (1+\sqrt{1+3d}) \right \rceil$, one has that
	if $\abs{d_i} < \frac{p}{Q}$ for $i=1,2,3$ then
	$ \N\left(\frac{\delta}{p}\right) < 1$.
\end{lemma}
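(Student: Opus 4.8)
The plan is to bound the norm $\N\left(\frac{\delta}{p}\right)$ coordinate by coordinate and thereby reduce the claim to a single quadratic inequality in the quantity $M/Q$. Writing $\frac{\delta}{p} = x_0 + x_1 i + x_2 j + x_3 k$, each imaginary coordinate $x_s$ (for $s=1,2,3$) is the $\QQ$-linear combination $\sum_{r=1}^{3} \frac{d_r}{p}\,a_{r,s}$ of the basis entries, while the real coordinate carries the extra term coming from the basis vector $1$, namely $x_0 = \frac{d_0}{p} + \sum_{r=1}^{3} \frac{d_r}{p}\,a_{r,0}$. The point to keep straight is that only $x_0$ sees $d_0$.

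First I would estimate each $\abs{x_s}$ by the triangle inequality. The hypothesis $\abs{d_0} < \frac{p}{2}$ gives $\abs{\frac{d_0}{p}} < \frac{1}{2}$, and $\abs{d_i} < \frac{p}{Q}$ gives $\abs{\frac{d_i}{p}} < \frac{1}{Q}$ for $i=1,2,3$; combined with $\abs{a_{r,s}} \le M$ this yields $\abs{x_0} < \frac{1}{2} + \frac{3M}{Q}$ and $\abs{x_s} < \frac{3M}{Q}$ for $s=1,2,3$. Feeding these into the norm formula $\N = x_0^2 + a x_1^2 + b x_2^2 + ab x_3^2$ reproduces the displayed chain of inequalities, whose upshot, after using $1 + a + b + ab = (1+a)(1+b) = d$ to collapse the three imaginary terms, is the bound $\N\left(\frac{\delta}{p}\right) < \frac{1}{4} + 3\frac{M}{Q} + 9d\left(\frac{M}{Q}\right)^2$.

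It then remains to force the right-hand side below $1$. Setting $t = M/Q$, this is the quadratic inequality $9d\,t^2 + 3t - \frac{3}{4} \le 0$, which holds precisely for $t$ between $0$ and its positive root $\frac{\sqrt{1+3d}-1}{6d}$; rationalizing the reciprocal, $\frac{6d}{\sqrt{1+3d}-1} = 2 + 2\sqrt{1+3d}$, turns the condition $t \le \frac{\sqrt{1+3d}-1}{6d}$ into $Q \ge M\left(2 + 2\sqrt{1+3d}\right)$. Hence the choice $Q = \left\lceil M\left(2 + 2\sqrt{1+3d}\right) \right\rceil$ satisfies this bound and delivers $\N\left(\frac{\delta}{p}\right) < 1$, as claimed.

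The argument is essentially routine once organized this way, and there is no serious obstacle. The only step requiring any care is the last one: one must solve the quadratic, select the correct (positive) root, and carry out the rationalization to recover the clean closed form for $Q$. The coordinate estimates themselves are harmless bookkeeping, provided one does not forget that the $\frac{1}{2}$ coming from $d_0$ attaches solely to the real coordinate.
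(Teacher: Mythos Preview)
Your argument is correct and follows exactly the same route as the paper: bound the real coordinate by $\tfrac12+\tfrac{3M}{Q}$ and each imaginary coordinate by $\tfrac{3M}{Q}$, collapse $1+a+b+ab$ to $d$ to obtain $\N\left(\tfrac{\delta}{p}\right) < \tfrac14 + 3\tfrac{M}{Q} + 9d\left(\tfrac{M}{Q}\right)^2$, and then solve the resulting quadratic in $M/Q$. You spell out the root computation and the rationalization that the paper leaves as a one-line equivalence, but the content is identical.
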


Now, we only need to guarantee that we can reduce to the case in
which, for $i=1,2,3$, one has $\abs{d_i} < \frac{p}{Q}$. For this
purpose, we use Theorem~201 in \cite[Chap.~IX, p.~170]{Hardy_Wright},
with a slightly modified statement to better suit our notations.

\begin{theorem}\label{Hardy-Wright-Thm}
	Given $\xi_1, \dots, \xi_n \in \RR$ and any $Q \in \RR^+$, we can find
	$u \in \{1, 2, \dots, Q^n\}$ so that, for all $i$, $u \xi_i$ differs
	from an integer by less than $\frac{1}{Q}$. In other words,
	\[ \abs{ u \xi_i - k_i} < \frac{1}{Q} \] for some $k_i \in \ZZ$, and
	for all $i$.
\end{theorem}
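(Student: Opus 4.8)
The plan is to recognize Theorem~\ref{Hardy-Wright-Thm} as the classical simultaneous Dirichlet approximation theorem and to prove it by the pigeonhole principle (Dirichlet's box principle). I would treat $Q$ as a positive integer, which is all that is needed here: in the application $Q=\lceil M(2+2\sqrt{1+3d})\rceil$ is an integer. Writing $\{x\}$ for the fractional part of $x$, I would consider the $Q^n+1$ points
\[ P_u = (\{u\xi_1\}, \dots, \{u\xi_n\}) \in [0,1)^n, \qquad u = 0,1,\dots,Q^n. \]

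First I would partition the half-open unit cube $[0,1)^n$ into the $Q^n$ subcubes $\prod_{i=1}^n [m_i/Q,(m_i+1)/Q)$ with $0\le m_i\le Q-1$. Since there are $Q^n+1$ points $P_u$ but only $Q^n$ subcubes, the box principle yields two indices $u_1<u_2$ for which $P_{u_1}$ and $P_{u_2}$ lie in the same subcube; hence $\abs{\{u_2\xi_i\}-\{u_1\xi_i\}}<\frac1Q$ for every $i$, the inequality being strict precisely because the subcubes are taken half-open.

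Then I would set $u=u_2-u_1$, so that $1\le u\le Q^n$, and $k_i=\lfloor u_2\xi_i\rfloor-\lfloor u_1\xi_i\rfloor\in\ZZ$. Splitting each $u_j\xi_i$ into its integer and fractional parts gives $u\xi_i-k_i=\{u_2\xi_i\}-\{u_1\xi_i\}$, whence $\abs{u\xi_i-k_i}<\frac1Q$ for all $i$, which is exactly the assertion.

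The argument is entirely elementary, so the only delicate points are bookkeeping rather than substance. One must use half-open cubes (and the half-open fractional part $\{\cdot\}\in[0,1)$) to obtain the strict bound $<\frac1Q$ instead of $\le\frac1Q$, and one must include the index $u=0$ among the points so that the difference $u_2-u_1$ lands in $\{1,\dots,Q^n\}$ rather than possibly exceeding it. For a genuinely real parameter $Q$ the same reasoning applies with $\lceil Q\rceil$ subintervals per coordinate, giving the approximation $<\frac{1}{\lceil Q\rceil}\le\frac1Q$ at the cost of weakening the range of $u$ to $\{1,\dots,\lceil Q\rceil^n\}$; since every use of the theorem in this paper instantiates $Q$ as an integer, this refinement is immaterial and I would simply record the integer case.
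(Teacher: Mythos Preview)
Your argument is the standard pigeonhole proof of simultaneous Dirichlet approximation and is correct as written; the bookkeeping with half-open subcubes and the inclusion of $u=0$ is handled properly, and your remark about integer versus real $Q$ is apt since every application in the paper uses an integer $Q$. Note, however, that the paper does not supply its own proof of this statement at all: it simply quotes Theorem~201 of Hardy--Wright \cite{Hardy_Wright} as an external input, so there is nothing in the paper to compare against beyond observing that your proof is precisely the classical box-principle argument one finds in that reference.
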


Let $\xi_i = \frac{d_i}{p}$, for some $p$ prime and
$\abs{d_i} \leq \frac{p}{2}$, and let $Q$ as in
Lemma~\ref{estimateN}. Then this theorem implies that there is
$u \in \{ 1,2, \dots, Q^3\}$, and there are $k_i\in\ZZ$, for
$i=1,2,3$, so that $\abs{u \frac{d_i}{p} - k_i} <
\frac{1}{Q}$. Setting $r_i = u d_i - p k_i\in\ZZ$, one has
$u d_i \equiv r_i \pmod{p}$ and $\abs{\frac{r_i}{p}} < \frac{1}{Q}$.

By Proposition~\ref{sum_prod}, in order to show that
$\frac{\delta}{p}\in\SC_H$, it is enough to show that
$u\, \frac{\delta}{p}\in\SC_H$.  We have
\small
\[ u\, \frac{\delta}{p} = \frac{u d_0}{p} + \frac{u d_1}{p} v_1 +
\frac{u d_2}{p} v_2 + \frac{u d_3}{p} v_3 \equiv \frac{d_0'}{p} +
\frac{r_1}{p} v_1 + \frac{r_2}{p} v_2 + \frac{r_3}{p} v_3
\pmod{H}, \]
\normalsize
where $\abs{r_i} < \frac{p}{Q}$ and $d_0' \equiv ud_0 \pmod{p}$ is
such that $\abs{d_0'} < \frac{p}{2}$. Let $\gamma$ be the quaternion
on the right hand side. Then, by Lemma~\ref{estimateN}, we have
$\N \left(\gamma \right) < 1 $. Since $1\leq u\leq Q^3$ one sees that
for $p > Q^3$ one has that $\gamma \notin H$, so that
$\gamma\in\SC_H$. That is, we have proved the following result.

\begin{theorem}\label{Reduction_primes}
	Let $H=\ZZ[1,v_1,v_2,v_3]$ be an order in the rational quaternion algebra
	$\A = \left( \frac{-a,-b}{\QQ}\right)$, where $a,b \in \NN$. Let
	$d=(1+a)(1+b)$, and
	\[M= \max_{\substack{1 \leq r \leq 3 \\ 0 \leq s \leq 3}}
	\abs{a_{r,s}}, \text{ where } v_r= a_{r,0} + a_{r,1} i + a_{r,2} j
	+ a_{r,3} k.\]
	Then, to check if $H$ is a PID, we only need to check if
	\[\frac{\delta}{p} = \frac{d_0}{p} + \frac{d_1}{p} v_1 +
	\frac{d_2}{p} v_2 + \frac{d_3}{p} v_3 \in \SC_H \] for
	$\delta \in H$ with $d_0\geq 0$, $\abs{d_i} \leq \frac{p}{2}$, and
	such that $ p \mid \N(\delta)$, and for
	\[p \text{ prime} \text{ with } p < \left \lceil 2 M
	(1+\sqrt{1+3d}) \right \rceil^3.\]
\end{theorem}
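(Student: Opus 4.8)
The plan is to assemble Theorem~\ref{Reduction_primes} directly from the three results already established: Theorem~\ref{Reduction_deltas}, Lemma~\ref{estimateN}, and Theorem~\ref{Hardy-Wright-Thm}. The heavy lifting has been done; what remains is to verify that the bound $p < Q^3$, with $Q = \lceil M(2+2\sqrt{1+3d})\,\rceil$, is exactly the threshold beyond which membership in $\SC_\A$ is automatic. First I would recall, via Theorem~\ref{Reduction_deltas}, that $H$ is a PID if and only if $\frac{\delta}{p} \in \SC_\A$ for every prime $p$ and every $\delta = d_0 + d_1 v_1 + d_2 v_2 + d_3 v_3$ with $\abs{d_i} < \frac{p}{2}$ and $p \mid \N(\delta)$. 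The goal is then to show that for $p \geq Q^3$ the condition $\frac{\delta}{p} \in \SC_\A$ holds trivially, so that only the finitely many primes $p < Q^3$ require genuine checking.

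Next I would fix such a $\delta$ and apply Theorem~\ref{Hardy-Wright-Thm} to the reals $\xi_i = \frac{d_i}{p}$ (for $i = 1,2,3$) with the parameter $Q$ of Lemma~\ref{estimateN}. This produces a multiplier $u \in \{1, \dots, Q^3\}$ and integers $k_i$ with $\abs{u\frac{d_i}{p} - k_i} < \frac{1}{Q}$. Setting $r_i = u d_i - p k_i$ gives $u d_i \equiv r_i \pmod{p}$ with $\abs{\frac{r_i}{p}} < \frac{1}{Q}$, hence $\abs{r_i} < \frac{p}{Q}$. Reducing $u d_0$ modulo $p$ to some $d_0'$ with $\abs{d_0'} < \frac{p}{2}$, I would observe that
\[
  u\,\frac{\delta}{p} \equiv \frac{d_0'}{p} + \frac{r_1}{p} v_1 + \frac{r_2}{p} v_2 + \frac{r_3}{p} v_3 =: \gamma \pmod{H}.
\]
Because the coefficients $r_i$ now satisfy $\abs{r_i} < \frac{p}{Q}$, Lemma~\ref{estimateN} yields $\N(\gamma) < 1$, so there is an $\alpha \in H$ (namely $\alpha = 1$) witnessing $\N(\alpha\gamma - 0) < 1$; provided $\gamma \neq 0$, this gives $\gamma \in \SC_\A$. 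By Proposition~\ref{sum_prod}(1) and the fact that $u\frac{\delta}{p} \equiv \gamma \pmod H$, we get $u\frac{\delta}{p} \in \SC_\A$, and then by Proposition~\ref{sum_prod}(2) (since $u \in H$) we conclude $\frac{\delta}{p} \in \SC_\A$.

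The crux — and the step I expect to be the main obstacle — is justifying that $\gamma \notin H$, equivalently $\gamma \neq 0$ as a class in $\A/H$, precisely when $p > Q^3 \geq u$. One must rule out the degenerate case in which the Hardy–Wright reduction collapses $u\frac{\delta}{p}$ into $H$, which would leave $\gamma = 0$ and make $\N(\gamma) < 1$ vacuous rather than a witness of membership in $\SC_\A$. The point is that the fractional part $\frac{\delta}{p}$ is genuinely nonintegral, and since $1 \leq u \leq Q^3 < p$ the multiplier $u$ is coprime to $p$ (as $p$ is prime and exceeds $u$), so multiplication by $u$ cannot clear the denominator $p$; thus $u\frac{\delta}{p} \notin H$ and therefore $\gamma \notin H$, giving $\gamma \neq 0$. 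I would make this coprimality argument explicit, as it is exactly where the threshold $p > Q^3$ enters and where an incautious argument could fail. Once this is settled, collecting the equivalences completes the proof: checking the PID property reduces to the finitely many primes $p < \lceil M(2+2\sqrt{1+3d})\,\rceil^3$.
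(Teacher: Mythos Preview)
Your proposal is correct and follows essentially the same route as the paper: reduce via Theorem~\ref{Reduction_deltas}, apply Theorem~\ref{Hardy-Wright-Thm} to the three coordinates $d_1/p,d_2/p,d_3/p$ to obtain a multiplier $u\le Q^3$, invoke Lemma~\ref{estimateN} on the resulting $\gamma$, and conclude via Proposition~\ref{sum_prod}. Your explicit coprimality argument (that $u<p$ forces $\gcd(u,p)=1$, so $u\,\delta/p\notin H$ and hence $\gamma\notin H$) is precisely the point the paper compresses into the phrase ``one sees that in this case one has that $\gamma\notin H$''.
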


\begin{remark}
	In this last proof, we have only used Theorem~\ref{Hardy-Wright-Thm}
	for the three last coefficients of $\delta$ because, although
	bounding all four coefficients would lead to a smaller bound in
	Lemma~\ref{estimateN}, that would entail a fourth power in the last
	result. It turns out that, when working with a particular case, one
	can get a smaller upper bound for the primes $p$ either by bounding
	the coefficients of $\delta$ through tighter estimates than the
	value $M$ above, or by using Theorem~\ref{Hardy-Wright-Thm} for only
	two coefficients, which can be done when the algebra one works with
	is of the form $\left( \frac{-1,-b}{\QQ} \right)$.
\end{remark}

We now have a criterion to check if a given order is a PID by
verifying if only a finite number of quaternions
$\rho \in \A \setminus H$ are in $\SC_H$. However, checking if a given
element $\rho$ is in this last set is still a problem, as there exists
an infinite number of possibilities for $\alpha, \beta$ to consider to
check whether $0 <\N(\alpha \rho - \beta) <1$. Let us now see how that
this can also be reduced to checking a finite number of possibilities.

\begin{proposition}
	Let $H$ be an order in a quaternion algebra $\A$. Let $p$ be a
	rational prime and $\rho = \frac{\delta}{p}$ in $\A \setminus
	H$. For any $\alpha \in H$ we have the following equivalence:
	\[ \exists \beta\in H \colon \alpha \rho - \beta \in \DD_\A \iff
	\forall \alpha_1 \in [\alpha]\; \exists \beta_1\in H \colon
	\alpha_1 \rho - \beta_1 \in \DD_\A, \]
	where $[\alpha]$ denotes
	the equivalence class of $\alpha$ in $H/pH$.
\end{proposition}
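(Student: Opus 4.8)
The plan is to dispatch the two implications very asymmetrically, since essentially all the content sits in one of them. The backward implication is immediate: $\alpha$ itself belongs to its own class $[\alpha] = \alpha + pH$, so the universally quantified right-hand statement specializes (take $\alpha_1 = \alpha$) to the existential left-hand statement. No work is needed there.

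For the forward implication, I would assume there exists $\beta \in H$ with $\alpha\rho - \beta \in \DD_\A$, fix an arbitrary $\alpha_1 \in [\alpha]$, and write $\alpha_1 = \alpha + p\mu$ for some $\mu \in H$ by definition of the class in $H/pH$. The key structural fact to exploit is that $p\rho = \delta \in H$; this is precisely where the hypothesis $\rho = \frac{\delta}{p}$ is used, and it is the only place the specific form of $\rho$ matters. It gives
\[ \alpha_1\rho = (\alpha + p\mu)\rho = \alpha\rho + \mu(p\rho) = \alpha\rho + \mu\delta, \]
and since $\mu, \delta \in H$ the correction term $\mu\delta$ lies in $H$. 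This makes the natural candidate transparent: I would set $\beta_1 := \beta + \mu\delta \in H$, and a direct substitution then yields $\alpha_1\rho - \beta_1 = \alpha\rho - \beta$. Consequently $\N(\alpha_1\rho - \beta_1) = \N(\alpha\rho - \beta) \in (0,1)$, so $\alpha_1\rho - \beta_1 \in \DD_\A$, which is exactly what is required.

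The main point to recognize, rather than an obstacle to overcome, is that because $p$ annihilates $\rho$ up to an element of $H$ (that is, $p\rho \in H$), translating $\alpha$ by any multiple of $p$ shifts $\alpha\rho$ only by an element of $H$, which can be reabsorbed into the choice of $\beta$. There are no estimates, limits, or case distinctions to carry out: the norm is literally preserved, not merely bounded below $1$, so the membership in $\DD_\A$ transfers verbatim. In spirit this is the same cancellation mechanism already used in Proposition~\ref{sum_prod}, now applied along the full fiber of $H \to H/pH$ above $[\alpha]$.
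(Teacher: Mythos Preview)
Your proof is correct and follows essentially the same approach as the paper: both arguments write one representative as the other plus $p$ times an element of $H$, expand, and absorb the resulting $\mu\delta$ (the paper's $-\gamma\delta$) into the choice of $\beta$. You additionally spell out the trivial backward implication, which the paper leaves implicit.
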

\begin{proof}
	Let us fix a prime $p$, and set
	$\rho = \frac{\delta}{p} \in \A \setminus H$.
	
	Let $\alpha, \beta$ be elements of $H$. Let $\alpha_1 \in H$ be any representative of $[\alpha] \in H/pH$, and let $\gamma \in H$ be
	such that $\alpha = \alpha_1 + p \gamma$. Then
	\[\alpha\, \frac{\delta}{p} - \beta = (\alpha_1 + p \gamma)\,
	\frac{\delta}{p} - \beta = \alpha_1\, \frac{\delta}{p} - (\beta -
	\gamma \delta). \]
	Thus, if $\alpha, \beta$ are such that
	$\alpha \frac{\delta}{p} - \beta \in \DD_\A$, then taking
	$\beta_1 = (\beta - \gamma \delta)$ one also has
	$\alpha_1 \frac{\delta}{p} - \beta_1 \in \DD_\A$.
\end{proof}

Therefore, if we put the previous Proposition together with Theorem
\ref{Reduction_primes}, we get the following result.

\begin{theorem}\label{Reduction_alphas}
	To check if an order $H$ in an algebra $\A$ is a PID, we only need
	to check if, for all $\rho= \frac{\delta}{p}$ with $p$ prime such
	that $p < \left \lceil 2 M (1+\sqrt{1+3d}) \right \rceil ^3$ and
	with $\delta \in H$ such that $d_0\geq 0$,
	$\abs{d_i} \leq \frac{p}{2}$, and such that $p \mid \N(\delta)$, there
	exists $\alpha, \beta \in H$, where
	$\alpha = a_0 + a_1 v_1 + a_2 v_2 + a_3 v_3$ with
	$\abs{a_i} \leq \frac{p}{2}$, such that
	$\alpha \rho - \beta \in \DD_\A$.
\end{theorem}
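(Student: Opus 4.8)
The plan is to read this statement as the synthesis of two facts already in hand: Theorem~\ref{Reduction_primes}, which reduces the PID question to checking $\frac{\delta}{p}\in\SC_\A$ for only finitely many primes $p<\left\lceil M(2+2\sqrt{1+3d})\right\rceil^3$ and finitely many numerators $\delta$ (those with $d_0\geq 0$, $\abs{d_i}<\frac{p}{2}$ and $p\mid\N(\delta)$); and the Proposition immediately preceding, which says that for a fixed $\rho=\frac{\delta}{p}$ the property ``there exists $\beta\in H$ with $\alpha\rho-\beta\in\DD_\A$'' depends only on the residue class $[\alpha]\in H/pH$. First I would fix one admissible pair $(p,\delta)$ and unwind the definition of $\SC_\A$: we have $\rho\in\SC_\A$ if and only if there exists at least one $\alpha\in H$ for which some $\beta\in H$ makes $\alpha\rho-\beta\in\DD_\A$.

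Next I would invoke the preceding Proposition to replace the quantifier ``$\exists\,\alpha\in H$'' by ``$\exists\,\alpha$ in a complete set of representatives of $H/pH$'': since the existence of a suitable $\beta$ is constant on each class $[\alpha]$, nothing is lost by letting $\alpha$ range over one representative per class. As $H$ has $\ZZ$-rank $4$ with basis $(1,v_1,v_2,v_3)$, the group $H/pH$ has exactly $p^4$ elements, and for odd $p$ the symmetric residue system furnishes the explicit representatives $\alpha=a_0+a_1v_1+a_2v_2+a_3v_3$ with $\abs{a_i}<\frac{p}{2}$, which is precisely the finite range of $\alpha$ in the statement. Finally I would observe that, once $\alpha$ and $\rho$ are fixed, the constraint $\N(\alpha\rho-\beta)<1$ confines $\beta$ to a bounded region of the lattice $H$, hence to finitely many candidates, so that the whole verification is genuinely algorithmic even though $\beta$ is left unbounded in the statement.

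The mathematical content lies entirely in the two prior results, so there is no hard analytic step remaining; the proof is a matter of correctly chaining the reductions. The one point to handle with care is the claim that the box $\abs{a_i}<\frac{p}{2}$ gives a complete set of representatives of $H/pH$: for odd $p$ the integers in $\left(-\frac{p}{2},\frac{p}{2}\right)$ are exactly $-\frac{p-1}{2},\dots,\frac{p-1}{2}$, a full residue system, so the reduction goes through coordinate by coordinate, but the degenerate case $p=2$ (where the strict inequality leaves only $a_i=0$) should be treated separately or absorbed by relaxing to a non-strict bound. I expect this edge case, together with making explicit that the residue-class reduction for $\alpha$ and the norm-based finiteness for $\beta$ jointly render the criterion effective, to be the only places requiring attention.
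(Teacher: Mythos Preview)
Your proposal is correct and matches the paper's approach exactly: the paper states this theorem with the single sentence ``if we put the previous proposition together with Theorem~\ref{Reduction_primes}, we get the following result,'' and your write-up is precisely a careful unpacking of that combination. You have in fact been more scrupulous than the paper, since your observation about $p=2$ (where $\abs{a_i}<1$ forces $a_i=0$ and misses the other fifteen classes of $H/2H$) exposes a genuine sloppiness in the theorem's stated bound; the paper's own code quietly fixes this by taking $a_i\in\{0,1\}$ when $p=2$.
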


All that is left to consider is the best way to find if there is a
suitable $\beta$ for each pair $\left( \rho, \alpha \right)$ in the
previous theorem. In other words, we have to solve
\begin{equation}\label{min_problem}
	\min_{\beta \in H} \N(\gamma - \beta),
\end{equation}
where $\gamma$ runs over the possible pairs $(\rho,\alpha)$. To put it
another way, we want to know if the closest quaternion to
$\alpha \rho$ that belongs to the order is at a distance less than 1
or not. This is related to the closest vector problem, and it can be
solved using an algorithm presented by László Babai in \cite{Babai}.

Let us first identify the algebra $\A$ with $\RR^4$ as follows
\begin{alignat*}{3}
	\phi \colon \A = \left(\frac{-a,-b}{\QQ} \right) & \longrightarrow
	\ \RR^4 \\
	t + x i + y j + x k \ & \longmapsto \
	(t,x\sqrt{a},y\sqrt{b},z\sqrt{ab}).
\end{alignat*}
One can easily see that the bilinear form associated with the algebra
$\A$, that is,
$B(\alpha, \beta) =\frac{1}{2} \Tr(\alpha\overline{\beta})$, is equal
to the inner product in $\RR^4$, that is,
$\phi(\alpha) \cdot \phi(\beta)$. Thus, problem of finding the minimun
in \eqref{min_problem} is equivalent to finding
\[ \min_{v \in \phi(H)} \norm{u-v}, \]
where $u=\phi(\gamma)$. If $\gamma = \sum_{i=0}^3 u_i v_i$, then
$u= \sum_{i=0}^3 u_i \phi(v_i)$, and Babai's Algorithm works by
rounding each $u_i$ to the nearest integer.

Now, Theorem~6.34 of \cite[p.~380]{Cript} tells us that this
algorithm finds an approximate solution to the closest vector problem
if the vectors in the basis are sufficiently orthogonal to one
another, which is to say that its \textit{Hadamard Ratio}
(cf.~\cite[p.~373]{Cript}) is reasonably close to 1. This ratio is
defined for any given basis, $(v_0, \dots v_n)$, of a lattice in
$\RR^n$ by
\[ \hh(v_0,\dots,v_{n-1}) =
\left(\frac{\abs{\det(A)}}{\prod_{i=0}^{n-1}\norm{v_i}}
\right)^{1/n},\] where $A$ is the matrix whose columns are the
coordinates of $v_i$.

Although this ratio might not be close to one for most bases, in our
case the algorithm did succeeded in find a suitable element, as we
will see in the next section. For this reason, the algorithm failing
to find a pair $(\alpha, \beta)$ for a particular $\frac{\delta}{p}$
does not imply immediately that the order is not a PID. But we can
then try to deal with each of those exceptions individually to find
whether the criterion~\eqref{eq:criterion} of
Theorem~\ref{PID_criterion} fails or not. However, this did not happen
for the orders we are concerned about in this paper.

\section{The Maximal Orders of Discriminant 7 and
	13}\label{orders7and13}

In this section, we will apply the Dedekind-Hasse's Criterion,
according to Theorem~\ref{Reduction_alphas}, to the maximal orders of
discriminant 7 and 13, in order to give an arithmetic proof that they
are both PIDs.

\subsection{Maximal Order of Discriminant 7}

Consider the rational quaternion algebra
$\A = \left( \frac{-1,-7}{\QQ} \right)$, and its maximal order of
discriminant 7,
\[ \hh_{1,7} = \ZZ [1, v_1, v_2, v_3] = \ZZ \left [1, i, \frac{1}{2}
(1+j), \frac{1}{2}(i+k) \right]\] with norm form
\small
\[ \N( d_0 + d_1 v_1 + d_2 v_2 + d_3 v_3 ) = \left( d_0 +
\frac{d_2}{2} \right)^2 + \left( d_1 + \frac{d_3}{2} \right)^2 + 7
\left( \frac{d_2}{2} \right)^2 + 7 \left( \frac{d_3}{2} \right)^2.\]
\normalsize

According to Theorem~\ref{Reduction_primes}, we only need to check if
elements of the form
$\frac{\delta}{p}= \frac{d_0}{p} + \frac{d_1}{p} v_1 + \frac{d_2}{p}
v_2 + \frac{d_3}{p} v_3$ are in $ \SC_H$, for each of the primes $p$ such that
$ p <\lceil 2 M(1+\sqrt{1+3d}) \rceil^3$, and for $\delta \in H$ with
$d_0\geq 0$, $\abs{d_i} \leq \frac{p}{2}$ and such that
$p \mid \N(\delta)$.  We easily see that, for this order, $d=16$ and
$M=1$, and thus it would be enough to deal with the primes $p <
16^3$.

Actually, this upper bound can be improved, as we are working with a
concrete order, and because our algebra is of the form
$\left( \frac{-1,-b}{\QQ} \right)$, we can bound only the coefficients
of $v_2$ and $v_3$ in order to get a better bound for $p$. In order to
see that, let us then reproduce, for this particular case, the
calculations done in the proof of Lemma~\ref{estimateN} to find the
smallest $Q$ such that $\abs{d_i} < \frac{p}Q$, for $i=2,3$, implies
$\N \left( \frac{\delta}{p} \right) <1$:

\begin{alignat*}{3}
	\N \left(\frac{\delta}{p} \right) & = \left( \frac{d_0}{p} +
	\frac{d_2}{2p} \right)^2 + \left( \frac{d_1}{p} + \frac{d_3}{2p}
	\right)^2 + 7 \left( \frac{d_2}{2p} \right)^2 + 7
	\left(  \frac{d_3}{2p} \right)^2 \\
	& < \left( \frac{1}{2} + \frac{1}{2Q} \right )^2 + \left(
	\frac{1}{2} + \frac{1}{2Q} \right )^2 + 7 \left( \frac{1}{2Q}
	\right )^2 + 7 \left( \frac{1}{2Q} \right )^2 \\
	& = \, \frac{1}{2} + \frac{1}{Q} + \frac{4}{Q^2}.
\end{alignat*}
Hence, one sees that, when $Q \geq 4$, one has
$\frac{\delta}{p} \in \SC_H$. Once again, using the same arguments as
before, we can see that we only need to check if
$\frac{\delta}{p} \in \SC_H$ for $\delta \in H$ as above, and for
primes $p$ with $p < 4^2$, which is a better upper bound than
before. Therefore we just need to verify if $\frac{\delta}{p}$ is in
$\SC_H$ for the primes $p = 2,3, 5, 7, 11, 13$, and $\delta$ with
$d_0\geq 0$, $\abs{d_i} \leq \frac{p}{2}$ and such that
$p \mid \N(\delta)$.

Using the PARI/GP \cite{PARI} code presented in the next section, we
were able to find how many quaternions of the form $\frac{\delta}{p}$
need to be checked according to Theorem~\ref{Reduction_primes}. Note
that the Hadamard ratio of this basis is
\[\hh(v_0,v_1,v_2,v_3) = \left( \frac{7}{8}\right)^{\frac{1}{4}}
\approx 0.967,\] 
which suggest that the Babai's algorithm will work. The code was run
in a laptop of 16GB of RAM, and it took less than a second to check
102 quaternions over six primes.

Finally, using Theorem~\ref{Reduction_alphas} and the corresponding
PARI/GP functions presented in Section~\ref{sec:PARI} below, we were
able to find, for each quaternion of the above form, pairs
$\alpha, \beta$ such that
$\alpha \frac{\delta}{p} - \beta \in \DD_\A$, thus proving the
following:

\begin{theorem}
	$\hh_{1,7}$ is a PID.
\end{theorem}

\subsection{Maximal Order of Discriminant 7}

Consider now the rational algebra
$\A = \left( \frac{-7,-13}{\QQ} \right)$, with its maximal order of
discriminant 13
\[\hh_{7,13} = \ZZ [1, v_1, v_2, v_3] = \ZZ \left [1,
\frac{1}{2}(1+i),\frac{1}{7}(i+k), \frac{1}{14} (7 + i + 7j+ k)
\right],\] and norm form
\begin{multline*}
	\N ( d_0 + d_1 v_1 + d_2 v_2 + d_3 v_3 ) = \left( d_0 +
	\frac{d_1}{2} + \frac{d_3}{2}\right)^2 + 7 \left( \frac{d_1}{2} +
	\frac{d_2}{7} + \frac{d_2}{14} \right)^2 +\\+ 13 \left(
	\frac{d_3}{2} \right)^2 + 91\left( \frac{d_2}{7} + \frac{d_3}{14}
	\right)^2.
\end{multline*}

Once again, by Theorem~\ref{Reduction_primes}, it is enough to check
if elements of the form
$\frac{\delta}{p}= \frac{d_0}{p} + \frac{d_1}{p} v_1 + \frac{d_2}{p}
v_2 + \frac{d_3}{p} v_3$ are in $ \SC_H$, for $\delta \in H$ with
$\abs{d_i} \leq \frac{p}{2}$ and such that $p \mid \N(\delta)$, and for
primes $p <39^3$, as for this order we have $d=112$ and
$M=\frac{1}{2}$. As above, this upper bound can be improved by setting
$\abs{d_i} < \frac{p}Q$, with $Q$ to be chosen as the smallest
positive integer so that $\N \left(\frac{\delta}{p} \right) <1$, as
follows:
\begin{alignat*}{3}
	\N \left(\frac{\delta}{p} \right) & = \left( \frac{d_0}{p} +
	\frac{d_1}{2p} + \frac{d_3}{2p}\right)^2 + 7 \left( \frac{d_1}{2p}
	+ \frac{d_2}{7p} + \frac{d_2}{14p} \right)^2 + 13
	\left( \frac{d_3}{2p} \right)^2 \\
	& \quad \ + 91\left(  \frac{d_2}{7p} + \frac{d_3}{14p} \right)^2 \\\
	& < \left( \frac{1}{2} + \frac{1}{2Q} + \frac{1}{2Q}\right)^2 + 7
	\left( \frac{1}{2Q} + \frac{1}{7Q} + \frac{1}{14Q} \right)^2 + 13
	\left( \frac{1}{2Q} \right)^2 + \\
	& \quad \ + 91\left(  \frac{1}{7Q} + \frac{1}{14Q} \right)^2 \\
	& = \, \frac{1}{4} + \frac{1}{Q} + \frac{12}{Q^2}.
\end{alignat*}
It then easily follows that $\frac{\delta}{p} \in \SC_H$ for
$p > \lceil 4.72 \rceil^3 = 125$. Thus, we can use the PARI/GP
functions presented in the next section to verify if the remaining
elements of the form $\frac{\delta}{p}$ are in $\SC_H$, that is, for
$\delta$ such that $p \mid \N(\delta)$ and with $d_0\geq 0$,
$\abs{d_i} \leq \frac{p}{2}$ and such that $p \mid \N(\delta)$, and for
the primes $p$ up to $125$. As the Hadamard ratio of this basis is
\[\hh(v_0,v_1,v_2,v_3)= \left(\frac{13}{16} \right)^{\frac{1}{4}}
\approx 0.949,\] the Babai's algorithm is expected to work.  The
code was once again run in a laptop of 16GB of RAM, and it took 45
minutes to check a total of $1\,339\,411$ quaternions over the first
30 primes, from 2 to 113, hence proving the following:

\begin{theorem}
	$\hh_{7,13}$ is a PID.
\end{theorem}

\section{PARI/GP Code}\label{sec:PARI}

The first step in order to code the algorithm is finding a way to
represent an arbitrary element of a given quaternion order. In order
to do this, we first define the function \textbf{quatH}, that
represents the quaternion $\alpha = a + b v_1 + c v_2 + d v_3$ in the
quaternion order $H=\ZZ[1,v_1,v_2, v_3]$ as a matrix, using the linear
operator of right multiplication:
\begin{alignat*}{3}
  & H \ & \longrightarrow & \; H \\
  & h \ & \longmapsto & \; \alpha h
\end{alignat*}
which, for the orders $\hh_{1,7}$ and $\hh_{7,13}$, yields the
matrices
\small
\[ B_7=
  \begin{pmatrix}
    a & -b-d & -2c & -2d \\
    b & a+c & -2d & 2c \\
    c & d & a+c & -b \\
    d & -c & b+d & a \\
  \end{pmatrix}, \; B_{13}=\begin{pmatrix}
    a & -2b-c-d & -2c-2d & -4d \\
    b & a+b+d & 2d & -2c \\
    c & -2d & a+b & 2b+c \\
    d & c+d & -b & a+d \\
  \end{pmatrix}, \]
\normalsize respectively.

Next, we define the functions \textbf{qHconj} and \textbf{qHnorm}
that, given a quaternion $q$ in matrix form, return the conjugate and
the norm of $q$, respectively. We also define the function
\textbf{quatb}, that given a quaternion, in matrix form, returns the
coefficients in vector form for better reading.

Here, we only present the code for dealing with $\hh_{1,7}$, as the
one for dealing with $\hh_{7,13}$ is entirely analogous.

\footnotesize

\begin{lstlisting}
/* Representation of a H_7-quaternion as a matrix and its conjugate */

quatH(a,b,c,d)=[a,-b-d,-2*c,-2*d;b,a+c,-2*d,2*c;c,d,a+c,-b;
                d,-c,b+d,a];

qHconj(a)=quatH(a[1,1]+a[3,1],-a[2,1],-a[3,1],-a[4,1]);

qHnorm(q)=(q*qHconj(q))[1,1];

/*Given any quaternion in matrix form, returns it in the form 
[a,b,c,d] for any basis */

quatb(q)=[q[1,1],q[2,1],q[3,1],q[4,1]]; 
\end{lstlisting}

\normalsize

Using this, we define the function \textbf{PIDChecking} that takes as
input the bound for the primes that one needs to check to see if the
order is a PID, for example the bound given in
Lemma~\ref{estimateN}. Using the previous notations, this function
will go through all $\delta$ and $p$ in the conditions of Theorem
\ref{Reduction_primes}, and it prints the quadruples
$[\delta, p, \alpha, \beta]$, whenever it finds $\alpha, \beta$ such
that $0 < \N(\alpha \frac{\delta}{p} - \beta) <1$. If it does not find
any such elements, it will add the pair $(\delta,p)$ to a list, which
it returns when the verification is finished.

The function \textbf{PIDChecking} uses the function
\textbf{alpha\_beta} that takes as input a pair $(\delta,p)$, and runs
through all possible $\alpha$ in the conditions of
Theorem~\ref{Reduction_alphas}, and then calls the function
\textbf{beta} that yields the quaternion $\beta$ obtained by rounding
the coefficient of $\alpha \frac{\delta}{p}$ to the nearest integer,
but away from zero when the value is half of an odd integer. If it
succeeds, it will return the pair $(\alpha, \beta)$, and if it fails,
it will simply return the string ``not found''.

\footnotesize

\begin{lstlisting}
/* Check which delta/p have N(delta/p) bigger than 1 and such that
p | N(delta), for p < bp; uses next function to get an alpha and a
beta, and prints them. Returns List P of quaternions delta such
that no pair (alpha, beta) was found */
   
PIDChecking(bp)= {
   local(N, D, count, Rho, AUX, P);
   P=List([]);
   Rho=List([]);
   count=0;
   
    forvec( X=[[0,1],[0,1],[0,1],[0,1]], 
        N = qHnorm(quatH(X[1],X[2],X[3],X[4])); 
        if(N >= 2^2 && N%2 ==0, 
           count++;  
           listput(~Rho,[quatH(X[1],X[2],X[3],X[4]),2]))); 
    print("Need checking: ",count , "for p=",2); 

    forprime(p=3, bp, 
        D = (p-1)/2; 
        count=0; 
        forvec(X = [[0,D],[-D,D],[-D,D],[-D,D]],  
          if(X[1]==0 && (X[2]<0 || (X[2]==0 && (X[3]<0 || 
                                   (X[3]==0 && X[4]<=0)))), 
              next()); 
          N = qHnorm(quatH(X[1],X[2],X[3],X[4])); 
          if(N >= p^2 && N%p ==0, 
             count++; 
             listput(~Rho,[quatH(X[1],X[2],X[3],X[4]),p]))); 
        print("Need checking: ",count, " for p=",p)); 

  foreach(Rho, V, AUX=alpha_beta(V[1],V[2]); 
         if(length(AUX)==2, 
            print([quatb(V[1]), V[2]], AUX), 
            listput(~P,V))); 
  print("Total:",length(Rho));
  return(P);
};

/* Given a delta=d and a prime p, checks all possible alpha in H/pH
and checks if the beta given by next function is such that
N(alpha * delta/p - beta) < 1. Stops at first succeful iteration */

alpha_beta(d,p)={
   local(M, m, x, N);
   if (p==2, M=1; m=0, M=(p-1)/2; m=-M);
   
   forvec(A = [[m,M],[m,M],[m,M],[m,M]], 
          if(A==[0,0,0,0], next());  
          x = (quatH(A[1],A[2],A[3],A[4])*d)/p;
          N=qHnorm(x-beta(x)); 
          if( (N < 1) && ( N > 0), 
              return([A , quatb(beta(x))])));

   return([quatHb(d), "not found"]);
};

/* Given a quaternion with coefficients not necessarily integers,
will round it away from zero if it is half an odd number */

beta(x)={
   local(b);
   b=[0,0,0,0];
   AUX=quatb(x);	
   for(i=1, 4, 
       if ( AUX[i]>=0, b[i]=round(AUX[i]), 
                       b[i]=- round(- AUX[i])));

   return(quatH(b[1],b[2],b[3],b[4]));
};
\end{lstlisting}

\normalsize

\vskip20pt \noindent {\bf Acknowledgments.} The authors were partially
supported by CMUP, member of LASI, which is financed by national funds
through FCT --- Fundação para a Ciência e a Tecnologia, I.P., under
the project with reference UIDP/00144/2025, DOI:
\url{https://doi.org/10.54499/UID/00144/2025}. The first author was
also supported by the FCT doctoral scholarship with reference
2022.13732.BD and DOI \url{https://doi.org/10.54499/2022.13732.BD}.


\begin{thebibliography}{1}\footnotesize
	
\bibitem{Babai} L. Babai, On Lovász' lattice reduction and the nearest
  lattice point problem, {\it Combinatorica} {\bf 6} (1) (1986),
  1--13.
	
\bibitem{Brzezinski} J. Brzezinski, Definite quaternion orders of
  class number one, { \it J. Théor. Nombres Bordeaux } {\bf 7} (1995),
  93--96.
	
\bibitem{Conway_Smith} J. H. Conway and D. A. Smith, { \it On
    Quaternions and Octonions}, A K Peters, Taylor \& Francis, 2003.
	
\bibitem{Deutsch} J. I. Deutsch, A quaternionic proof of the
  universality of some quadratic forms, {\it Integers} {\bf 8} (2)
  (2008).
	
\bibitem{Fitzgerald} R. W. Fitzgerald, Norm Euclidean quaternion
  orders, {\it Integers} {\bf 11} (2011).
	
\bibitem{Hardy_Wright} G. H. Hardy and E. M. Wright, {\it An
    Introduction to the Theory of Numbers}, 4th Edition, Oxford
  University Press, 1960.
	
\bibitem{Hurwitz} A. Hurwitz, {\it Vorlesungen \"Uber die
    Zahlentheorie der Quaternionen}, Springer Berlin Heidelberg, 1919,
  English transl. by Nicola Oswald and J¨orn Steuding, in {\it
    Hurwitz’s Lectures on the Number Theory of Quaternion}, 2023.
	
\bibitem{Pall46} G. Pall, On generalized quaternions, {\it
    Trans. Amer. Math. Soc.} {\bf 59} (1946), 80--332.
	
\bibitem{Pollard_Diamond} H. Pollard and H. G. Diamond, {\it The
    Theory of Algebraic Numbers}, The Carus Mathematical Monographs,
  The Mathematical Association of America, 1975.
	
\bibitem{Reiner} I. Reiner, {\it Maximal Orders}, Academic Press,
  1975.
	
\bibitem{Cript} J.H. Silverman, J. Pipher and J.Hoffstein, {\it An
    Introduction to Mathematical Cryptography}, Springer New York, New
  York, 2008.
	
\bibitem{PARI} The PARI~Group, PARI/GP version \texttt{2.17.0},
  Univ. Bordeaux (2023), available from
  \url{http://pari.math.u-bordeaux.fr/}
	
\bibitem{Vigneras} M.-F. Vignéras, {\it Arithmètique des Algébres de
    Quaternions}, Springer-Verlag, 1980.
	
\end{thebibliography}
\end{document}